\newtheorem*{thm}{Theorem}
\newtheorem*{lemma}{Lemma}
\newtheorem*{ex}{Example}
\newtheorem*{defi}{Definition}
\def\qed{\nobreak\hfill $\square$}
\def\<{\langle}
\def\>{\rangle}
\def\Mn{M_n(\bbbc)}
\def\Mnsa{M_n^{sa}(\bbbc)}
\def\Mnp{M_n^{+}(\bbbc)}
\def\Mr{M_r(\bbbc)}
\def\M3{M_3(\bbbc)}
\def\M3r{M_3(\bbbr)}
\def\diag{\mathrm{Diag}}
\def\im{\mathrm{i}}
\def\bbbr{{\mathbb R}}
\def\bbbc{{\mathbb C}}
\def\Tr{\mathrm{Tr}\,}
\def\var{{\rm Var}}
\newcommand{\R}{\mathbb{R}}
\newcommand{\C}{\mathbb{C}}
\newcommand*{\ba}{\begin{array}}
\newcommand*{\ea}{\end{array}}
\newcommand*{\be}{\begin{equation}}
\newcommand*{\ee}{\end{equation}}
\newcommand{\tr}{\mathrm{Tr}}
\newcommand{\mc}[1]{\mathcal{#1}}
\newcommand{\conv}[1]{\mathbf{Conv}\left(#1\right)}
\newcommand{\Kk}{\mathcal{K}}
\newcommand{\norm}[1]{\left\|#1\right\|}
\newcommand*{\inner}[2]{\left<#1,\,#2\right>}
\begin{document}
%\rightline{\today }
\bigskip
\centerline{\LARGE {\bf A characterization theorem for matrix variances}}
\bigskip
\centerline{{\bf D\'enes Petz\footnote{E-mail: petz@math.bme.hu } and D\'aniel 
Virosztek\footnote{E-mail: virosz89@gmail.com}}}

\begin{center}
Department of Mathematical Analysis, \\ Budapest University of Technology and Economics, \\ 
Egry J\'ozsef u.~1., Budapest, 1111 Hungary
\end{center}

\bigskip

\begin{abstract}
Some recent papers formulated sufficient conditions for the decomposition of matrix variances \cite{LZ-PD, PD-TG}. A statement was that if we have one or two observables, then the decomposition is possible. In this paper we consider an arbitrary finite set of observables and we present a necessary and sufficient condition for the decomposition of the matrix variances.
\end{abstract}

The subject here is matrix theory, see \cite{Bh2, HP, OP}. By a density matrix $D \in \Mn$ we 
mean $D \ge 0$ and $\Tr D =1$. In quantum information theory the traditional variance is defined by
$$
\var_D (A)=\Tr D A^2 - (\Tr D A)^2,
$$
where  $D$ is a density matrix and $A \in \Mn$ is a self-adjoint operator. This noncommutative variance is a natural extension of the variance in probability theory \cite{Fell}, and has several applications \cite{GHP, HP, Hol, PDco, PDq, PD-TG}.
It is easy to show that
$$
\var_D (A+\lambda I)=\var_D (A) \quad \quad (\lambda \in \bbbr)
$$
and the concavity of the variance functional is well-known:
$$
\var_{D}(A) \ge \sum_i \lambda_i \var_{D_i}(A)\,, 
$$ 
when $D=\sum_i \lambda_i D_i$, $\lambda_i \ge 0$ and $\sum_i \lambda_i=1$. It was proved 
in \cite{PD-TG} that for every  self-adjoint operator $A$ and density matrix $D$ there are projections $P_k$ such that $D= \sum_k \lambda_k P_k$ with $0 < \lambda_k$, $\sum_k \lambda_k=1$ and $\var_{D}(A) = \sum_k \lambda_k \var_{P_k}(A)$ holds. 

There is another example when  the previous $A$ is replaced with $A_1, A_2, \dots, A_r$, 
they are also self-adjoint operators. Then the standard variance is a matrix in $\Mr$:
$$
\left[\var_{D} (A_1, \dots, A_r) \right]_{i,j}=\Tr D A_i A_j -(\Tr D A_i) (\tr D A_j) 
\qquad (1\le i,j \le r).
$$

Assume that $0 \le \lambda_1, \lambda_2$ and $\lambda_1+ \lambda_2=1$. An elementary
computation gives that
$$
[\var_{\lambda_1 D_1+\lambda_2 D_2} (A_1,\dots, A_r)-\lambda_1\var_{D_1} (A_1,\dots, A_r)-
\lambda_2\var_{D_2} (A_1,\dots ,A_r)]_{ij}
= \lambda_1 \lambda_2 a_i a_j , \quad
$$
where
$$
a_i=\Tr( D_{1}- D_{2}) A_i , \qquad 1 \le i \le r.
$$
It follows that
$$
\var_{\lambda_1 D_1+\lambda_2 D_2} (A_1,\dots, A_r) \ge \lambda_1\var_{D_1} (A_1,\dots, A_r)+
\lambda_2\var_{D_2} (A_1,\dots ,A_r)\, .
$$

So we have the concavity of the variance functional $D \mapsto \var_{D}(A_1,\dots, A_r)$:
$$
\var_{D}(A_1,\dots, A_r) \ge \sum_i \lambda_i \var_{D_i}(A_1,\dots, A_r)\quad \mbox{if} 
\quad D=\sum_i \lambda_i D_i,
$$
where $\lambda_i \ge 0$ and $\sum_i \lambda_i=1$. For $r=2$ the equality may be also true and this
is the result in \cite{LZ-PD}: $D$ is a certain convex combination of projections $P_k$ 
as $D=\sum_k p_kP_k$ and
$$
\var_D (A_1,A_2) = \sum_k p_k \var_{P_k} (A_1,A_2).
$$

In the present paper we give a necessary and sufficient condition for the previous equality 
for an arbitrary set $\{A_1,\dots, A_r\}$ of self-adjoint operators.

%%%%%%%%%%%%%%%%%%%%%%%%%%%%%%

\section{General computations}

As it was declared, the variance functional is concave, that is, if $D_1, \dots, D_m$ are density 
matrices, $A_1, \dots, A_r$ are self-adjoint operators and $D=\sum_{k=1}^{m} \lambda_k D_k$ with 
some $0 \leq \lambda_1, \dots, \lambda_m,$ $\sum_{k=1}^{m} \lambda_k =1$ then
\be     \label{konkav}
\var_{D} (A_1, \dots, A_r) \geq \sum_{k=1}^{m} \lambda_k \var_{D_k} (A_1, \dots, A_r).
\ee
We are interested in the case of equality in (\ref{konkav}).

\begin{lemma} 
If $D_1, \dots, D_m$ are density matrices, $A_1, \dots, A_r$ are self-adjoint operators and 
$D=\sum_{k=1}^{m} \lambda_k D_k$ with some $0 < \lambda_1, \dots, \lambda_m,$ $\sum_{k=1}^{m} 
\lambda_k =1,$ then
\be
\label{equal}
\var_{D} (A_1, \dots, A_r) = \sum_{k=1}^{m} \lambda_k \var_{D_k} (A_1, \dots, A_r)
\ee
if and only if 
\be \label{trace}
\tr D_k A_j =\tr D A_j \quad {\rm for \,\, all} \quad 1\le k \le m \quad{\rm and} 
\quad  1 \le j \le r.
\ee
\end{lemma}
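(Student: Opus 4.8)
The plan is to reduce the multi-observable equality to the two-term case already handled in the excerpt, using an induction on $m$ together with a "telescoping" of the convex combination. First I would observe that it suffices to treat $m=2$: given $D=\sum_{k=1}^m\lambda_k D_k$, write $D = \lambda_1 D_1 + (1-\lambda_1) D'$ where $D' = \sum_{k\ge 2}\frac{\lambda_k}{1-\lambda_1}D_k$, apply concavity twice, and note that equality in (\ref{equal}) forces equality at each stage; an inductive hypothesis on $D'$ then propagates the trace condition (\ref{trace}) to $D_2,\dots,D_m$, while the $m=2$ case handles $D_1$ versus $D'$, and a short argument using $\tr D' A_j$ versus the $\tr D_k A_j$ closes the loop. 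So the crux is the case $m=2$.

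For $m=2$, I would compute directly. The identity displayed just before Section~1 gives, for $D=\lambda_1 D_1+\lambda_2 D_2$ with $\lambda_1+\lambda_2=1$,
\be
\label{defect}
\bigl[\var_{D}(A_1,\dots,A_r) - \lambda_1\var_{D_1}(A_1,\dots,A_r) - \lambda_2\var_{D_2}(A_1,\dots,A_r)\bigr]_{ij} = \lambda_1\lambda_2\, a_i a_j,
\ee
where $a_i = \tr(D_1-D_2)A_i$. Since $0<\lambda_1,\lambda_2$, the left-hand matrix vanishes if and only if the rank-one matrix $[a_ia_j]_{ij}$ vanishes, i.e. if and only if $a_i=0$ for every $i$, i.e. $\tr D_1 A_i = \tr D_2 A_i$ for all $i$. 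The remaining step is to see that this common value equals $\tr D A_i$: indeed $\tr D A_i = \lambda_1\tr D_1 A_i + \lambda_2 \tr D_2 A_i = \tr D_1 A_i = \tr D_2 A_i$ once $a_i=0$. Conversely, if (\ref{trace}) holds then each $a_i=0$ and (\ref{defect}) shows equality in (\ref{equal}). This settles $m=2$.

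For general $m$, rather than a formal induction one can also argue in one shot: the "defect" $\var_D(A_1,\dots,A_r) - \sum_k\lambda_k\var_{D_k}(A_1,\dots,A_r)$ should expand, by the same elementary computation that produced (\ref{defect}), into a sum of positive semidefinite rank-one pieces indexed by pairs $k<\ell$, with the $(i,j)$ entry a positive combination of $(\tr(D_k-D_\ell)A_i)(\tr(D_k-D_\ell)A_j)$. Equality in (\ref{konkav}) then forces $\tr D_k A_i = \tr D_\ell A_i$ for all $k,\ell,i$, hence all equal to $\tr D A_i$ by averaging; the converse is again immediate from the expansion. I would present whichever of these two routes is cleaner — probably the direct expansion, writing $D_k - D = \sum_\ell \lambda_\ell(D_k - D_\ell)$ and substituting into $\var_D(A_i,A_j) = \sum_k \lambda_k \var_{D_k}(A_i,A_j) + \sum_k \lambda_k\bigl(\tr(D_k-D)A_i\bigr)\bigl(\tr(D_k-D)A_j\bigr)$, an identity checked by expanding both sides. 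The main obstacle is purely bookkeeping: verifying that this last identity holds entrywise (it is the matrix analogue of the parallel-axis / variance decomposition formula), after which the semidefiniteness and the equality condition are automatic since a sum of rank-one matrices $v_k v_k^{\mathsf T}$ with positive weights vanishes exactly when every $v_k=0$.
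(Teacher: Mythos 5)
Your proposal is correct; the key identity you propose,
\[
\left[\var_{D} (A_1,\dots,A_r)-\sum_{k=1}^{m}\lambda_k \var_{D_k} (A_1,\dots,A_r)\right]_{ij}
=\sum_{k=1}^{m}\lambda_k \bigl(\tr (D_k-D)A_i\bigr)\bigl(\tr (D_k-D)A_j\bigr),
\]
does hold entrywise (the $\tr D A_iA_j$ terms cancel by linearity, and the remaining scalar identity is the usual covariance/parallel-axis decomposition), and since $\lambda_k>0$ the defect, being a positive combination of the rank-one matrices $v_kv_k^{\mathsf T}$ with $(v_k)_i=\tr(D_k-D)A_i$, vanishes exactly when every $v_k=0$, which is condition (\ref{trace}); the converse is immediate. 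This is a somewhat different route from the paper's proof: there, the operators are first centered using shift invariance ($A_j\mapsto A_j-(\tr DA_j)I$), the defect is known to be positive semidefinite from concavity, and one only inspects its diagonal entries, which reduce to $\sum_k\lambda_k\bigl(\tr D_k(A_j-\lambda_jI)\bigr)^2=0$ — essentially the diagonal of your identity. Your direct expansion is slightly more work to verify but is more informative, since it exhibits the whole defect matrix explicitly and yields concavity and the equality criterion in one stroke, with no appeal to shift invariance or to the positive semidefiniteness argument. Your alternative inductive reduction to $m=2$ is also sound (equality in a sum of two positive semidefinite matrices forces both to vanish, so equality propagates to each stage), but as you note it involves more bookkeeping than the one-shot expansion.
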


\begin{proof}
The variance has the shift invariance property 
$$\var_{D} (A_1, \dots, A_r) = \var_{D} (A_1-\lambda_1 I, \dots, A_r-\lambda_r I)$$
for every reals $\lambda_1, \dots, \lambda_r.$ Set $\lambda_j := \tr D A_j.$ With this choice 
$\tr D (A_j-\lambda_j I)=0$ holds for every j. Therefore
$$[\var_{D} (A_1, \dots, A_r)]_{ij} = [\var_{D} (A_1-\lambda_1 I, \dots, A_r-\lambda_r I)]_{ij}= 
\tr D (A_i-\lambda_i I) (A_j-\lambda_j I).$$
Because of the concavity ot the variance,
$$
\var_{D} (A_1, \dots, A_r)- \sum_{k=1}^{m} \lambda_k \var_{D_k} (A_1, \dots, A_r)
$$
is a positive semi-definite matrix, hence it is equal to zero if and only if the diagonal 
elements are zeros, that is,
\be \label{szam}
\tr D (A_j-\lambda_j I)^2-\left(\sum_{k=1}^{m} \lambda_k \tr D_k (A_j-\lambda_j I)^2- \lambda_k 
\left(\Tr D_k (A_j-\lambda_j I)\right)^2\right)=0
\ee
holds for every $j.$ It is easy to check that (\ref{szam}) holds if and only if $\tr D_k 
(A_j-\lambda_j I)=0$ for every $k$, $j$ and this is equivalent to (\ref{trace}).
\end{proof} \qed

In the next section we use this Lemma to characterize those sets of self-adjoint operators for which the decomposition of the matrix variances with projections is possible.

%%%%%%%%%%%%%%%%%%%%%%%%%%

\section{The main theorem}

Let us introduce some notations:
\[
\Mnsa:=\{ A \in \Mn: \ A^{*}=A\}\,,
\]
\[
\Mnp:=\{ C \in \Mn: \ C \geq 0\}\,,
\]
\[
\mc{S}(\C^{n}):=\{ D \in \Mn: \ D \geq 0, \ \tr D=1 \}\, .
\]
For an arbitrary subspace $\Kk \subset \C^n, $ we denote by $Q^{\Kk}$ the orthogonal 
projection onto $\Kk.$ We define
\[
A^{\Kk}:=Q^{\Kk} A Q^{\Kk}
\]
for every operator $A \in \Mn$ and
\[ 
\mc{B}(\Kk):=Q^{\Kk} \Mn Q^{\Kk}, \qquad \mc{B}^{sa}(\Kk):=Q^{\Kk} \Mnsa Q^{\Kk}, 
\]
\[
\mc{B}^{+}(\Kk):=Q^{\Kk} \Mnp Q^{\Kk}, \qquad  \mc{S}(\Kk):=\{X \in B^{+}(\Kk): \ \tr X=1\}.
\]
\begin{defi}
Let $\{A_1, \dots, A_r\}$ be a set of self-adjoint operators in $\Mn.$
$\{A_1, \dots, A_r\}$ is said to be \emph{variance-decomposable} if for every $D \in \mc{S}(\C^n)$ 
there exist $P_1, \dots, P_m$ rank-one projections such that
$$
D=\sum_{k=1}^{m} \lambda_k P_k \qquad \rm{ and } \qquad \var_{D} (A_1, \dots, A_r) = 
\sum_{k=1}^{m} \lambda_k \var_{P_k} (A_1, \dots, A_r)
$$
with some $0 \leq \lambda_k, \ \sum_k \lambda_k=1.$
\end{defi}

\begin{thm}
$\{A_1, \dots, A_r\} \subset \Mn$ is variance-decomposable if and only if
\be \label{tet}
\mathrm{dim}\left( \mathrm{span}\left\{ I^{\Kk}, A_1^{\Kk}, \dots, A_r^{\Kk}\right\}\right) < 
\left( \mathrm{dim}\,\, \Kk\right)^2
\ee
for every $\Kk \subset \C^n$ subspace with $ \mathrm{dim}\, \Kk > 1$.
\end{thm}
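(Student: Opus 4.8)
The plan is to push everything through the Lemma so that the problem becomes a statement in the convex geometry of density matrices, and then to induct on $\rank D$.

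Fix $D\in\mc{S}(\C^n)$, put $\Kk:=\mathrm{supp}\,D$, $d:=\dim\Kk=\rank D$, and $c_j:=\tr DA_j$. If $D=\sum_k\lambda_kP_k$ with $\lambda_k>0$, $\sum_k\lambda_k=1$, then each $P_k$ is supported on $\Kk$, and by the Lemma this decomposition realizes equality in (\ref{konkav}) exactly when $\tr P_kA_j=c_j$ for all $k,j$. Hence $D$ admits the required decomposition if and only if $D$ lies in the convex hull of the rank-one elements of
\[
\mc{M}_D:=\{X\in\mc{S}(\Kk):\ \tr XA_j^{\Kk}=c_j,\ 1\le j\le r\}=\mc{S}(\Kk)\cap\mc{L}_D,
\]
where $\mc{L}_D:=\{X\in\mc{B}^{sa}(\Kk):\tr X=1,\ \tr XA_j^{\Kk}=c_j\ \forall j\}$ is an affine subspace of the $d^2$-dimensional space $\mc{B}^{sa}(\Kk)$. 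By nondegeneracy of the trace form on $\mc{B}^{sa}(\Kk)$ and since $D\in\mc{L}_D$, we get $\dim\mc{L}_D=d^2-s$ with $s:=\dim\mathrm{span}\{I^{\Kk},A_1^{\Kk},\dots,A_r^{\Kk}\}$. Finally, $D$ is faithful on $\Kk$, hence lies in the relative interior of $\mc{S}(\Kk)$; being also in $\mc{L}_D$, it lies in the relative interior of $\mc{M}_D$, so $\mc{M}_D$ has affine hull $\mc{L}_D$ and every point of $\mc{M}_D$ outside this relative interior is a non-faithful state of $\Kk$, i.e.\ has rank $<d$.

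\emph{Sufficiency.} Assume (\ref{tet}) and argue by strong induction on $\rank D$. If $\rank D=1$, then $D$ itself is a rank-one projection and there is nothing to prove. If $d=\rank D>1$, applying (\ref{tet}) to $\Kk=\mathrm{supp}\,D$ gives $s<d^2$, hence $\dim\mc{M}_D=d^2-s\ge1$. Pick a line in $\mc{L}_D$ through $D$; it meets the compact convex set $\mc{M}_D$ in a nondegenerate segment $[D_-,D_+]$ with $D$ in its interior, so $D=tD_-+(1-t)D_+$ for some $t\in(0,1)$, and by the observation above $\rank D_\pm<d$. By the induction hypothesis each $D_\pm$ decomposes; by the Lemma every rank-one projection $P$ occurring satisfies $\tr PA_j=\tr D_\pm A_j=c_j$ for all $j$ (the second equality because $D_\pm\in\mc{M}_D$). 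Substituting these decompositions into $D=tD_-+(1-t)D_+$ writes $D$ as a convex combination, with positive weights summing to $1$, of rank-one projections each having the same $A_j$-expectations as $D$; the Lemma now yields $\var_D(A_1,\dots,A_r)=\sum(\text{weights})\,\var_P(A_1,\dots,A_r)$, so $D$ is decomposed, and $D$ was arbitrary.

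\emph{Necessity.} Suppose (\ref{tet}) fails for some $\Kk$ with $\dim\Kk=d>1$. Since $\dim\mc{B}^{sa}(\Kk)=d^2$, this means $\{I^{\Kk},A_1^{\Kk},\dots,A_r^{\Kk}\}$ spans $\mc{B}^{sa}(\Kk)$. Apply variance-decomposability to the maximally mixed state $D_0:=\tfrac1dQ^{\Kk}$, which has rank $d\ge2$ and support $\Kk$: there are rank-one projections $P_k$ with positive weights, supported on $\Kk$, with $\tr P_kA_j=\tr D_0A_j$ for all $j$ by the Lemma. Together with $\tr P_k=1=\tr D_0$ and the spanning property this forces $\tr(P_k-D_0)M=0$ for all $M\in\mc{B}^{sa}(\Kk)$; taking $M=P_k-D_0\in\mc{B}^{sa}(\Kk)$ gives $P_k=D_0$, which is impossible since $\rank P_k=1\ne d=\rank D_0$. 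Hence (\ref{tet}) must hold. I expect the delicate point to be the geometric reduction in the second paragraph, in particular identifying the relative boundary of $\mc{M}_D$ with its rank-deficient states — this hinges on $D$ being faithful on its own support, and it is exactly what drives the rank-induction while ensuring the rank-one projections produced inductively automatically sit inside $\mc{M}_D$; granting it, the rest is the chord argument plus an application of the Lemma.
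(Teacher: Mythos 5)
Your proof is correct and follows essentially the same route as the paper: reduce via the Lemma to matching expectations, then induct on $\rank D$ by exploiting that $D$ is a relative interior point of $\mc{S}(\Kk)$ on its support while the constraint slice has positive dimension under (\ref{tet}), and for necessity note that the spanning hypothesis forces the constraint set to be a single point. Your explicit chord argument and the use of the maximally mixed state are just concrete instantiations of the paper's ``convex hull of the relative boundary'' step and of its $\mc{L}_{D,\mathbf{A}}^{\Kk}=\{D\}$ observation, respectively.
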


Note that this theorem immediately shows that every set of self-adjoint operators with at most 
two elements is variance-decomposable. (This is the result of \cite{PD-TG} and \cite{LZ-PD}.)

\begin{proof}
By the Lemma, $\{A_1, \dots, A_r\} \subset \Mn$ is variance-decomposable if and only 
if for every $D \in \mc{S}(\C^n)$ density operator there exist $P_1, \dots, P_m$ rank-one 
projections such that $D \in \conv{\{P_1, \dots, P_m\}}$ -- where $\conv{H}$ denotes the 
convex hull of the set H -- and
\be
\tr P_k A_j= \tr D A_j \,\, \rm{ for \,\, all } \,\, 1 \leq k \leq m \ \rm{ and } \ 1 
\leq j \leq r. 
\ee
We show that the condition (\ref{tet}) is sufficient.
It is enough to show that for every $D \in \mc{S}(\C^n), \ \mathrm{rank}(D)>1$ there exist 
$E_1, \dots, E_m \in \mc{S}(\C^n)$ density operators such that
\be \label{konv}
D \in \conv{\{E_1, \dots, E_m\}}
\ee
and
\be \label{linr}
\Tr E_k A_j= \tr D A_j \,\, \rm{ for \,\, all } \,\, k \,\, \rm{ and } \,\, j 
\ee
and
\be \label{rang}
\mathrm{rank}(E_k) < \mathrm{rank}(D).
\ee

Let $D$ be an arbitrary element of  $\mc{S}(\C^n)$ with $\mathrm{rank}(D)>1, \ \Kk:= 
\mathrm{range}(D).$
$\mc{B}^{sa}(\Kk)$ is a $\left(\mathrm{dim}(\Kk)\right)^2$ dimensional Hilbert space over 
the field $\R$ with the positive definite inner product $\inner{X}{Y}=\tr X Y.$
Let us use the notation $\mathbf{A}=(A_1, \dots, A_r).$ Define
$$
\mc{L}_{D, \mathbf{A}}^{\Kk}:=\{X \in \mc{B}^{sa}(\Kk): \ \inner{X}{I}=1, \inner{X}{A_j}=
\inner{D}{A_j} \,\, \rm{ for \,\, all } \,\, j\}.
$$
Clearly,
$$
\mc{L}_{D, \mathbf{A}}^{\Kk}=\{X \in \mc{B}^{sa}(\Kk): \ \inner{X}{I^{\Kk}}=1, \inner{X}{A_j^{\Kk}}=
\inner{D}{A_j^{\Kk}} \,\, \rm{ for\,\, all }\,\, j\}.
$$
Because of the assumption $\mathrm{dim}\left( \mathrm{span}\left\{ I^{\Kk}, A_1^{\Kk}, \dots, 
A_r^{\Kk}\right\}\right) < \left( \mathrm{dim} \Kk\right)^2,$ $\mc{L}_{D, \mathbf{A}}^{\Kk}$ is an 
affine subspace of $\mc{B}^{sa}(\Kk)$ with positive dimension. \par
It is well-known that $\mc{S}(\Kk)$ is a bounded convex set (for example, $\norm{P}_{2} \leq 1$ 
if $P \in \mc{S}(\Kk),$ where $\norm{\cdot}_2$ denotes the Hilbert-Schmidt norm). Therefore, 
$\mc{L}_{D, \mathbf{A}}^{\Kk} \cap \mc{S}(\Kk)$ is a bounded  convex set and
$$
\mc{L}_{D, \mathbf{A}}^{\Kk} \cap \mc{S}(\Kk) \subset \conv{\mc{L}_{D, \mathbf{A}}^{\Kk} \cap  
\partial \mc{S}(\Kk)},
$$
where $\partial \mc{S}(\Kk)$ denotes the relative boundary of $\mc{S}(\Kk).$
\par
By definition, $D \in \mc{L}_{D, \mathbf{A}}^{\Kk} \cap \mc{S}(\Kk),$ and hence
$$
D=\sum_{k=1}^{m} \lambda_k E_k \,\,\rm{ with \,\, some } \,\, \{E_k\}_{k=1}^{m} 
\subset \mc{L}_{D, \mathbf{A}}^{\Kk} \cap  \partial \mc{S}(\Kk) \,\, \rm{ and } \,\, 
0 \leq \lambda_k, \ \sum \lambda_k=1.
$$
This is exactly the statemant we wanted to prove, because $E_k \in \partial \mc{S}(\Kk)$ 
implies that $\mathrm{rank}(E_k)<\mathrm{dim}(\Kk)=\mathrm{rank}(D),$ that is, (\ref{rang}) 
holds, and $E_k \in \mc{L}_{D, \mathbf{A}}^{\Kk}$ implies that (\ref{linr}) holds. \par

Note that $D$ has a maximal rank in $\mc{S}(\Kk),$ hence it is a (relative) interior point of $\mc{S}(\Kk).$
On the other hand, $\mc{L}_{D, \mathbf{A}}^{\Kk}$ lies in the affine hull of $\mc{S}(\Kk)$ and has a positive dimension. Therefore, the intersection $\mc{L}_{D, \mathbf{A}}^{\Kk} \cap \mc{S}(\Kk)$ is not a single point.
\par

To show that the condition (\ref{tet}) is necessary as well, assume that
$$
\mathrm{dim}\left( \mathrm{span}\left\{ I^{\Kk}, A_1^{\Kk}, \dots, A_r^{\Kk}\right\}\right) = 
\left( \mathrm{dim}\, \Kk\right)^2
$$
for some  $\Kk \subset \C^n$ subspace with $ \mathrm{dim} \Kk > 1.$ Set $D \in \mc{S}(\Kk), \ 
\mathrm{rank}(D)>1.$
Because of the assumption $\mathrm{dim}\left( \mathrm{span}\left\{ I^{\Kk}, A_1^{\Kk}, \dots, 
A_r^{\Kk}\right\}\right) = \left( \mathrm{dim} \Kk\right)^2,$ $\mc{L}_{D, \mathbf{A}}^{\Kk}$ is a $0$ 
dimensional affine subspace of $\mc{B}^{sa}(\Kk),$ that is,
$\mc{L}_{D, \mathbf{A}}^{\Kk}=\{D\}.$ Therefore, we have by Lemma that the decomposition 
of $D$ is impossible. \qed
\end{proof}

The next example shows that for an arbitrary large $n$ there exists a set of self-adjoint 
operators with only three elements which is not variance-decomposable.

\begin{ex}
For every $n \geq 2$ we can show $A_1, A_2, A_3 \in \Mn$ self-adjoint matrices and a $D \in 
\mc{S}(\C^n)$ density with the following property.
If $P_1, \dots, P_m$ are rank-one projections such that  $D=\sum_{k=1}^{m} \lambda_k P_k$ with 
some $0 < \lambda_1, \dots, \lambda_m,$ $\sum_{k=1}^{m} \lambda_k =1,$ then
\be\label{nemegy}
\var_{D} (A_1, A_2, A_3) \ne \sum_{k=1}^{m} \lambda_k \var_{P_k} (A_1, A_2, A_3).
\ee
Let us use the Pauli matrices
$$
\sigma_1=\left[\ba{cc} 0 & 1 \\ 1 & 0 \ea \right], \quad
\sigma_2=\left[\ba{cc} 0 & -\im  \\ \im  & 0 \ea \right], \quad
\sigma_3=\left[\ba{cc} 1 & 0 \\ 0 & -1 \ea \right]
$$
to define $A_1, A_2, A_3 $ in block-matrices in the following way
$$
A_1:=\left[\ba{cc} \sigma_1 & 0 \\ 0 & 0 \ea\right], \quad
A_2:=\left[\ba{cc} \sigma_2 & 0 \\ 0 & 0 \ea\right], \quad
A_3:=\left[\ba{cc} \sigma_3 & 0 \\ 0 & 0 \ea\right],
$$
and $D:=\diag\left(\frac{1}{2},\frac{1}{2},0 ,\dots, 0 \right).$
By the Lemma, 
$$
\var_{D} (A_1, A_2, A_3) = \sum_{k=1}^{m} \lambda_k \var_{P_k} 
(A_1, A, A_3)
$$ 
if and only if $\tr P_k A_j=0$ for every $k$ and $j,$ but in this case we have $P_{k}^{\Kk}=0$ 
for $\Kk=\mathrm{range}(D),$ hence $D$ can not be a convex combination of the $P_k$'s. 
Therefore, (\ref{nemegy}) holds. \qed
\end{ex}

The proof of the statement of the previous example is shorter if we use the Theorem.
The only thing we have to observe is that
\[
\mathrm{dim}\left( \mathrm{span}\left\{ I^{\Kk}, A_1^{\Kk}, A_2^{\Kk}, A_3^{\Kk} \right\}\right) = 
\left( \mathrm{dim} \Kk\right)^2
\]
for $\Kk=\mathrm{range}(D).$

{\bf Acknowledgement.} This work was partially supported by the Hungarian Research Grant 
OTKA K104206 and we are grateful to Prof. G. T\'oth for communication. The authors wish to 
thank the referees for their constructive comments and advices.

\end{document}